\theoremstyle{plain}
\newtheorem{Thm}{Theorem}[section]
\newtheorem{Cor}{Corollary}[section]
\newtheorem{Lem}{Lemma}[section]
\newtheorem{Rem}{Remark}[section]
\theoremstyle{definition}
\newtheorem{Def}{Definition}[section]
\theoremstyle{remark}
\def\R{\mathbb{R}}
\def\N{\mathbb{N}}
\begin{document}
\baselineskip=17pt

\title[Stability type results]
{Stability type results concerning the fundamental equation of information of multiplicative type}
\author {Eszter Gselmann}
\address{
Institute of Mathematics\\
University of Debrecen\\
H-4010 Debrecen\\
P. O. Box 12 \\
Hungary}
\email{gselmann@math.klte.hu}
\keywords{Stability, fundamental equation of information of multiplicative type}
\thanks{This research has been supported by the Hungarian Scientific Research Fund (OTKA) Grant NK 68040. }
\subjclass[1991]{39B82}

\begin{abstract}
The paper deals with the stability of the fundamental equation of information of multiplicative type.
It will be proved that the equation in question is stable in the sense of Hyers and Ulam
under some assumptions. This result will be applied to
prove the stability of a system of functional equations that characterizes the recursive
measures of information of multiplicative type.

\end{abstract}

\maketitle

\section{Introduction}
The stability theory of functional equations deals with the following question: When it is true that the solution of an equation differing slightly from a given one, must of necessity be close to the solution of the given equation. In case of a positive answer to the previous problem, we say that the equation in question is \textit{stable}.
This problem was raised by Ulam (see \cite{Ula40}) and answered by Hyers who proved that the Cauchy equation is stable (\cite{Hye41}). Since then, this result has been extended and generalized in several ways (see e.g. \cite{For95}, \cite{Ger94} and \cite{HIR98}).
The investigation of the stability of the exponential Cauchy equation highlighted a new phenomenon which is now usually called \textit{superstability} (see e.g. \cite{HIR98}). The question of superstability is also dealt with in this paper.
Solving a stability problem, raised in \cite{Mak07} we give an affirmative answer to the case of higher dimensional information functions. \\
Throughout this paper let $k$ and $n$ be an arbitrary but fixed positive integers and denote
	\[
	\Gamma_{n}:=\left\{\left(p_{1}, \ldots, p_{n}\right)\in\R^{kn}\vert p_{i}\geq \mathbf{0}, \sum^{n}_{i=1}p_{i}=\mathbf{1}\right\}
\]
and
	\[
	D_{k}:=\left\{\left(x, y\right)\in\R^{2k}\vert x, y\in[0, 1[^{k}, x+y\leq \mathbf{1}\right\}.
\]
Furthermore, let
\[
\R^{k+}=\left\{x\in \R^{k}\vert x>\mathbf{0}\right\}.
\]
Here $\mathbf{1}$ represents the $k$-vector $\left(1, \ldots, 1\right)\in\R^{k}$ and all operations on vectors are to be done componentwise, i.e., $p_{i}\geq\mathbf{0}$ denotes that all coordinates of the vector $p_{i}\in\R^{k}$ are non-negative and we write $x+y\leq\mathbf{1}$ if $x_{i}+y_{i}\leq 1$ holds for all $i=1, \ldots, k$, where $x_{i}$ and $y_{i}$ denote the $i^{th}$ coordinates of the vector $x$ and $y$, respectively. \\
In what follows, we present some basic results from the theory of functional equations which we shall use throughout the paper, these results can be found for instance in \cite{Kuc85}. \\
\noindent
A function $M:[0, 1]^{k}\rightarrow\R$ is called \emph{multiplicative}, if
\begin{equation}
	M\left(x\cdot y\right)=M\left(x\right)\cdot M\left(y\right)
\end{equation}
holds for all $x, y\in\left[0, 1\right]^{k}$.\\
\noindent
We say that $A:\left[0, 1\right]^{k}\rightarrow\R$ is \emph{additive on $D_{k}$} if
\begin{equation}
A\left(x+y\right)=A\left(x\right)+A\left(y\right)
\end{equation}
holds for all pairs $\left(x, y\right)\in D_{k}$.

\begin{Lem}
If $M:[0,1]^{k}\rightarrow\R$ is both multiplicative on $[0,1]^{k}$ and additive on $D_{k}$, then $M$ is either identically zero or a projection, i.e.,
\begin{equation}
	M\left(x\right)=M\left(x_{1}, \ldots, x_{k}\right)=x_{j}, \qquad x\in [0,1]^{k},
\end{equation}
for some $j\in \left\{0, \ldots, k\right\}$.
\end{Lem}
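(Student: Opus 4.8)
The plan is to first pin down the value $M(\mathbf{1})$, then reduce everything to a one–dimensional statement along the diagonal, and finally read off the projection index from the coordinate idempotents $e_{1},\ldots,e_{k}$.

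Since $M$ is multiplicative, $M(\mathbf{1})=M(\mathbf{1}\cdot\mathbf{1})=M(\mathbf{1})^{2}$, so $M(\mathbf{1})\in\{0,1\}$. If $M(\mathbf{1})=0$, then for every $x\in[0,1]^{k}$ we get $M(x)=M(x\cdot\mathbf{1})=M(x)M(\mathbf{1})=0$, i.e. $M\equiv 0$. So from now on I would assume $M(\mathbf{1})=1$ and study the function $m:[0,1]\rightarrow\R$, $m(t):=M(t\mathbf{1})$. Multiplicativity of $M$ gives $m(st)=m(s)m(t)$ for all $s,t\in[0,1]$; for $s,t\in[0,1[$ with $s+t\leq 1$ the pair $(s\mathbf{1},t\mathbf{1})$ lies in $D_{k}$, so additivity of $M$ on $D_{k}$ yields $m(s+t)=m(s)+m(t)$; and $m(u)=M\bigl((\sqrt{u}\,\mathbf{1})\cdot(\sqrt{u}\,\mathbf{1})\bigr)=m(\sqrt{u})^{2}\geq 0$ for $u\in[0,1]$. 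By the extension theorem for functions additive on a restricted domain (see \cite{Kuc85}), $m$ is the restriction of an additive $a:\R\rightarrow\R$, and since $a$ is bounded below on the interval $[0,1]$ it is linear, $a(t)=ct$; then $c=m(1)=M(\mathbf{1})=1$, so $m(t)=t$ on $[0,1]$. (This step is essentially the case $k=1$ of the lemma.)

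Finally, let $e_{i}$ denote the $i$-th coordinate unit vector. From $e_{i}\cdot e_{i}=e_{i}$ we get $M(e_{i})\in\{0,1\}$. Fixing $t\in[0,1[$ and iterating $D_{k}$-additivity along the partial sums $t(e_{1}+\cdots+e_{m})$, $m=1,\ldots,k$ — all of which stay in $[0,1[^{k}$ and whose consecutive differences $t e_{m+1}$ keep the sum $\leq\mathbf{1}$ — gives $M(t\mathbf{1})=\sum_{i=1}^{k}M(t e_{i})$, while $M(t e_{i})=M\bigl((t\mathbf{1})\cdot e_{i}\bigr)=m(t)M(e_{i})$. Hence $t=m(t)=m(t)\sum_{i=1}^{k}M(e_{i})$, so $\sum_{i=1}^{k}M(e_{i})=1$, and exactly one index, say $j$, satisfies $M(e_{j})=1$ and $M(e_{i})=0$ for $i\neq j$. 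Then for an arbitrary $x=(x_{1},\ldots,x_{k})\in[0,1]^{k}$,
\[
M(x)=M(x)M(e_{j})=M(x\cdot e_{j})=M(x_{j}e_{j})=M\bigl((x_{j}\mathbf{1})\cdot e_{j}\bigr)=m(x_{j})M(e_{j})=x_{j},
\]
which is the asserted projection.

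The one delicate point is that additivity is available only on the half–open set $D_{k}$, so the coordinate vectors $e_{i}$ cannot be fed directly into the Cauchy equation; they must be approached through the scaled vectors $t e_{i}$ with $t<1$, taking care that every partial sum used in the iteration remains inside $[0,1[^{k}$. Apart from that, and from invoking the standard extension/regularity facts for restrictedly additive functions, the argument is a sequence of direct computations.
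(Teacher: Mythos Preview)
The paper does not supply its own proof of this lemma; it is listed among the ``basic results from the theory of functional equations'' quoted from Kuczma's monograph \cite{Kuc85}, so there is nothing to compare your argument against line by line. Your proof is correct: the reduction to the diagonal function $m(t)=M(t\mathbf{1})$, the identification $m(t)=t$ via the extension and regularity theory for restrictedly additive functions bounded below, and the final step singling out the projection index through the idempotents $e_i$ all go through as written. You were right to flag the half-open nature of $D_k$ and to route every use of additivity through the scaled vectors $te_i$ with $t<1$; with that precaution each partial sum stays in $[0,1[^{k}$ and the inductive splitting $M(t\mathbf{1})=\sum_i M(te_i)$ is legitimate. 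In effect you have written out a clean, self-contained proof of a result the paper merely cites.
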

In the proof of our theorem we shall use the following lemma.
\begin{Lem}\label{lemma}
Let $M:[0,1]^{k}\rightarrow\R$ be a multiplicative function. Then the following statements are equivalent.
\begin{enumerate}[(i)]
	\item $M$ is additive on $D_{k}$;
	\item $M\left(x\right)+M\left(\mathbf{1}-x\right)=1$ holds for all $x\in [0,1]^{k}$.
\end{enumerate}
\end{Lem}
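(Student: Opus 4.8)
The plan is to prove the two implications separately; in both, the engine is the elementary identity $\lambda z=(\lambda\mathbf 1)\cdot z$ (the product taken componentwise), which together with multiplicativity lets one pull a scalar factor out of $M$.

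\emph{The implication (i) $\Rightarrow$ (ii).} We may assume $M\not\equiv 0$, so that $M(\mathbf 1)=M(\mathbf 1)^{2}$ forces $M(\mathbf 1)=1$. First I would observe that $\left(\tfrac12\mathbf 1,\tfrac12\mathbf 1\right)\in D_{k}$, so additivity on $D_{k}$ gives $1=M(\mathbf 1)=2M\!\left(\tfrac12\mathbf 1\right)$, i.e. $M\!\left(\tfrac12\mathbf 1\right)=\tfrac12\neq 0$. Then, for an arbitrary $x\in[0,1]^{k}$, both $\tfrac12 x$ and $\tfrac12(\mathbf 1-x)$ have all coordinates in $\left[0,\tfrac12\right]$ and sum to $\tfrac12\mathbf 1\leq\mathbf 1$, so $\left(\tfrac12 x,\tfrac12(\mathbf 1-x)\right)\in D_{k}$; additivity yields $M\!\left(\tfrac12\mathbf 1\right)=M\!\left(\tfrac12 x\right)+M\!\left(\tfrac12(\mathbf 1-x)\right)$. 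Writing $\tfrac12 x=(\tfrac12\mathbf 1)\cdot x$ and $\tfrac12(\mathbf 1-x)=(\tfrac12\mathbf 1)\cdot(\mathbf 1-x)$ and applying multiplicativity, this reads $M\!\left(\tfrac12\mathbf 1\right)=M\!\left(\tfrac12\mathbf 1\right)\bigl(M(x)+M(\mathbf 1-x)\bigr)$, and dividing by $M\!\left(\tfrac12\mathbf 1\right)\neq 0$ gives (ii).

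\emph{The implication (ii) $\Rightarrow$ (i).} Evaluating (ii) at $\mathbf 0$ gives $M(\mathbf 0)+M(\mathbf 1)=1$; as $M(\mathbf 0)=M(\mathbf 0)^{2}\in\{0,1\}$ and $M(\mathbf 0)=1$ would force $M\equiv 1$ (violating (ii)), we get $M(\mathbf 0)=0$ and $M(\mathbf 1)=1$. Now fix $(x,y)\in D_{k}$ and set $s:=x+y\in[0,1]^{k}$. I would first treat the case in which every coordinate of $s$ is positive: then $u:=\left(x_{i}/s_{i}\right)_{i=1}^{k}\in[0,1]^{k}$ with $\mathbf 1-u=\left(y_{i}/s_{i}\right)_{i=1}^{k}$, so (ii) gives $M(u)+M(\mathbf 1-u)=1$; multiplying by $M(s)$ and using $s\cdot u=x$, $s\cdot(\mathbf 1-u)=y$ together with multiplicativity produces exactly $M(x)+M(y)=M(s)=M(x+y)$.

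The remaining case, $I:=\{i:s_{i}=0\}=\{i:x_{i}=y_{i}=0\}\neq\emptyset$, is the only genuinely delicate point, and I expect it to be the main obstacle. Let $J:=\{1,\dots,k\}\setminus I$ and let $\chi_{J}\in[0,1]^{k}$ carry $1$'s on $J$ and $0$'s on $I$; from $\chi_{J}\cdot\chi_{J}=\chi_{J}$ the value $M(\chi_{J})$ is $0$ or $1$. If $M(\chi_{J})=0$, then $x=x\cdot\chi_{J}$, $y=y\cdot\chi_{J}$, $s=s\cdot\chi_{J}$ force $M(x)=M(y)=M(s)=0$ and the identity is trivial. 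If $M(\chi_{J})=1$, I would pass to the coordinates in $J$: the map $\sigma_{J}\colon[0,1]^{\,|J|}\to[0,1]^{k}$ that inserts $1$'s in the positions indexed by $I$ is multiplicative, hence $N:=M\circ\sigma_{J}$ is a multiplicative function on $[0,1]^{\,|J|}$, and using $\mathbf 1-\sigma_{J}(w)=\sigma_{J}(\mathbf 1-w)\cdot\chi_{J}$ together with $M(\chi_{J})=1$ one checks that $N$ again satisfies (ii). The coordinate projections of $x$ and $y$ onto $J$ then form a pair in $D_{|J|}$ whose sum has all coordinates positive, so the case already settled, applied to $N$, gives additivity there; transporting back through $M(z)=N\!\left(z|_{J}\right)$, valid for every $z\in[0,1]^{k}$ that vanishes on $I$ (again because $M(\chi_{J})=1$), yields $M(x+y)=M(x)+M(y)$. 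The difficulty is entirely in this boundary analysis: once $s$ acquires a zero coordinate the normalization $x/s$ is undefined, and, $M$ not being assumed continuous, no limiting argument is available — one is forced to use the idempotents $\chi_{J}$ and a reduction in dimension instead.
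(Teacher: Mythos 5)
The paper does not actually prove this lemma --- it is listed among the preliminaries with a pointer to the literature --- so there is no in-paper argument to compare yours against; I can only assess correctness. Your argument is correct and complete where it applies. The half-vector trick in the forward direction is fine: $\left(\tfrac12 x,\tfrac12(\mathbf 1-x)\right)\in D_{k}$ for every $x\in[0,1]^{k}$, and $M\!\left(\tfrac12\mathbf 1\right)=\tfrac12\neq 0$ lets you divide. In the converse, the part you rightly single out as delicate (coordinates with $x_{i}=y_{i}=0$, where the normalization $x/s$ breaks down and no continuity is available) is handled properly: the dichotomy on the idempotent $\chi_{J}$, the verification that $N=M\circ\sigma_{J}$ inherits (ii) via $\mathbf 1-\sigma_{J}(w)=\sigma_{J}(\mathbf 1-w)\cdot\chi_{J}$ and $M(\chi_{J})=1$, and the transport $M(z)=N(z|_{J})$ for $z$ vanishing on $I$ are all sound, and the case $J=\emptyset$ falls into the trivial subcase because you have already shown $M(\mathbf 0)=0$.

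The one point to flag is the opening clause ``we may assume $M\not\equiv 0$.'' This is not a harmless reduction: the identically zero function is multiplicative and is additive on $D_{k}$ (the paper's first lemma explicitly lists it as a solution of exactly that pair of conditions), yet it fails (ii). So (i) does not imply (ii) for $M\equiv 0$, and the equivalence as stated is false without excluding that case. What you have actually proved is the corrected statement ``for multiplicative $M\not\equiv 0$, (i) and (ii) are equivalent''; note that the implication (ii)$\Rightarrow$(i) needs no such hypothesis, since (ii) by itself rules out $M\equiv 0$. You should state this exclusion explicitly as a hypothesis (or record the zero function as a genuine exception) rather than fold it into a ``we may assume,'' which reads as if the omitted case were covered by symmetry or triviality when in fact it is a counterexample.
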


\begin{Lem}
Let $M:[0,1]^{k}\rightarrow\R$ be a multiplicative function then
\[
M\left(x\right)\geq 0
\]
holds for all $x\in [0,1]^{k}$.
\end{Lem}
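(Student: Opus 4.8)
The plan is to exploit the fact that every element of $[0,1]^k$ has a square root inside $[0,1]^k$, together with multiplicativity. Concretely, given $x=(x_1,\dots,x_k)\in[0,1]^k$, define its componentwise square root $\sqrt{x}:=\left(\sqrt{x_1},\dots,\sqrt{x_k}\right)$. Since each coordinate $x_i$ lies in $[0,1]$, so does $\sqrt{x_i}$, hence $\sqrt{x}\in[0,1]^k$ and the multiplicative equation may legitimately be applied to the pair $\left(\sqrt{x},\sqrt{x}\right)$. Because all vector operations are componentwise, we have $\sqrt{x}\cdot\sqrt{x}=x$, and therefore
\[
M\left(x\right)=M\left(\sqrt{x}\cdot\sqrt{x}\right)=M\left(\sqrt{x}\right)\cdot M\left(\sqrt{x}\right)=\left(M\left(\sqrt{x}\right)\right)^{2}\geq 0 .
\]
This establishes the claim for every $x\in[0,1]^k$.

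There is essentially no obstacle here: the only point that needs a moment's care is checking that $\sqrt{x}$ indeed belongs to the domain $[0,1]^k$ (so that multiplicativity is applicable) and that the componentwise product $\sqrt{x}\cdot\sqrt{x}$ recovers $x$ exactly; both are immediate from the conventions on vector operations fixed in the introduction. No regularity assumption on $M$ is used, only the functional equation itself on $[0,1]^k$.
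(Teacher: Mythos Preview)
Your argument is correct and is precisely the standard proof: since $\sqrt{x}\in[0,1]^{k}$ whenever $x\in[0,1]^{k}$, multiplicativity gives $M(x)=M(\sqrt{x})^{2}\geq 0$. The paper itself does not supply a proof of this lemma (it is listed among background results taken from \cite{Kuc85}), so there is nothing further to compare.
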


\begin{Lem}
Let $M:[0,1]^{k}\rightarrow\R$ be multiplicative. Then
\begin{equation}
M\left(x\right)=M\left(x_{1}, \ldots, x_{k}\right)=\prod^{k}_{i=1}m_{i}\left(x_{i}\right)
\end{equation}
for all $x\in [0,1]^{k}$, where each $m_{i}:[0,1]\rightarrow\R$ is multiplicative $(i=1, \ldots, k)$.
\end{Lem}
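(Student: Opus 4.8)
The plan is to recover the one-variable factors by restricting $M$ to the lines through $\mathbf{1}$ parallel to the coordinate axes, and then to reconstruct $M$ from the componentwise product structure of $[0,1]^{k}$. For each $i\in\{1,\ldots,k\}$ and $t\in[0,1]$, I would let $v_{i}(t)\in[0,1]^{k}$ denote the vector whose $i$-th coordinate equals $t$ and whose remaining coordinates equal $1$, and then define
\[
m_{i}(t):=M\bigl(v_{i}(t)\bigr),\qquad t\in[0,1].
\]
Since $[0,1]$ is closed under multiplication, every $v_{i}(t)$ lies in the domain of $M$, and $v_{i}(s)\cdot v_{i}(t)=v_{i}(st)$ holds as a componentwise product. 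Hence, by the multiplicativity of $M$,
\[
m_{i}(st)=M\bigl(v_{i}(s)\cdot v_{i}(t)\bigr)=M\bigl(v_{i}(s)\bigr)M\bigl(v_{i}(t)\bigr)=m_{i}(s)m_{i}(t),
\]
so each $m_{i}\colon[0,1]\to\R$ is multiplicative.

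Next I would observe that for any $x=(x_{1},\ldots,x_{k})\in[0,1]^{k}$ one has the factorization
\[
x=v_{1}(x_{1})\cdot v_{2}(x_{2})\cdots v_{k}(x_{k})
\]
as a componentwise product, with each partial product again lying in $[0,1]^{k}$. Applying multiplicativity of $M$ successively to this product yields
\[
M(x)=\prod_{i=1}^{k}M\bigl(v_{i}(x_{i})\bigr)=\prod_{i=1}^{k}m_{i}(x_{i}),
\]
which is exactly the asserted representation. The degenerate case causes no trouble: from $M(\mathbf{1})=M(\mathbf{1}\cdot\mathbf{1})=M(\mathbf{1})^{2}$ one gets $M(\mathbf{1})\in\{0,1\}$, and if $M(\mathbf{1})=0$ then $M(x)=M(x)M(\mathbf{1})=0$ for all $x$, so $M\equiv0$ and the formula holds with each $m_{i}\equiv0$; if $M(\mathbf{1})=1$ the computation above applies verbatim.

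I do not expect a genuine obstacle here. The only points that require care are that all the auxiliary vectors $v_{i}(t)$ and the partial products $v_{1}(x_{1})\cdots v_{j}(x_{j})$ stay inside $[0,1]^{k}$ — which is immediate since a product of numbers in $[0,1]$ again lies in $[0,1]$ — and the coordinatewise identity $v_{1}(x_{1})\cdots v_{k}(x_{k})=x$, which holds by inspection. Everything else is a direct application of the defining equation of multiplicativity.
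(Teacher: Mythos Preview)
Your argument is correct and is the standard one: restrict $M$ to the coordinate lines through $\mathbf{1}$ to get the one-variable multiplicative factors $m_i$, then use the componentwise factorization $x=\prod_i v_i(x_i)$ together with multiplicativity of $M$. The remarks about partial products staying in $[0,1]^{k}$ and the degenerate case $M(\mathbf{1})=0$ are the right checks.

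There is nothing to compare against in the paper itself: this lemma is stated there without proof, as one of the preliminary facts referred to \cite{Kuc85}. Your write-up is exactly the kind of proof one would supply for it, so you may simply keep it as is.
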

Now we fall to dealing with information measures (see \cite{AD75}, \cite{ESS98}).
\begin{Def}
A sequence of functions $I_{n}:\Gamma_{n}\rightarrow\R$ ($n=2, 3, \ldots$) is called \emph{information measure}.
\end{Def}
The usual information-theoretical interpretation is that $I_n\left(p_{1},\ldots, p_{n}\right)$
is a measure of uncertainty as to the outcome of an experiment having $n$
possible outcomes with probabilities $p_{1},\ldots, p_{n}$. \\
Some desiderata for measures of information can be found in \cite{AD75} as well as in \cite{ESS98}. Although in this paper we will use only the following properties.
\begin{Def}
The sequence of functions $I_{n}:\Gamma_{n}\rightarrow\R$ ($n=2, 3, \ldots$) is
\begin{enumerate}[(i)]
	\item \emph{M-recursive}, if
	\[
	I_{n}\left(p_{1}, \ldots, p_{n}\right)=I_{n-1}\left(p_{1}+p_{2}, p_{3}, \ldots, p_{n}\right)+
	M\left(p_{1}+p_{2}\right)I_{2}\left(\frac{p_{1}}{p_{1}+p_{2}}, \frac{p_{2}}{p_{1}+p_{2}}\right)
\]
holds for all $n=3, 4, \ldots$ and $\left(p_{1}, \ldots, p_{n}\right)\in\Gamma_{n}$, with some multiplicative function
$M:\left[0, 1\right]^{k}\rightarrow\R$ and with the convention $\frac{0}{0+0}=0$.
\item \emph{3-semisymmetric}, if
	\[
	I_{3}\left(p_{1}, p_{2}, p_{3}\right)=I_{3}\left(p_{1}, p_{3}, p_{2}\right)
\]
holds for all $\left(p_{1}, p_{2}, p_{3}\right)\in\Gamma_{3}$.
\end{enumerate}
\end{Def}
\noindent
Using the following theorem the characterization of information measures can be transformed into solving functional equations, see e.g., \cite{ESS98}.
\begin{Thm}
If the sequence of functions $I_{n}:\Gamma_{n}\rightarrow\R$ $\left(n=2, 3, \ldots\right)$ is M-recursive and 3-semisymmetric then the function
$f:[0, 1]^{k}\rightarrow\R$ defined by
	\[
	f(x):=I_{2}\left(\mathbf{1}-x, x\right)
\]
satisfies the so-called fundamental equation of information of multiplicative type M, i.e.,
\begin{equation}\label{FEIM}
f(x)+M\left(\mathbf{1}-x\right)f\left(\frac{y}{\mathbf{1}-x}\right)=
f(y)+M\left(\mathbf{1}-y\right)f\left(\frac{x}{\mathbf{1}-y}\right)
\end{equation}
for all $\left(x, y\right)\in D_{k}$.
\end{Thm}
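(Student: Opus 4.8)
The plan is to exploit recursivity and $3$-semisymmetry at the single level $n=3$. Fix $(x,y)\in D_{k}$ and set
\[
p_{1}:=\mathbf{1}-x-y,\qquad p_{2}:=y,\qquad p_{3}:=x.
\]
The defining conditions of $D_{k}$ (namely $x,y\in[0,1[^{k}$ and $x+y\le\mathbf{1}$) give $p_{1},p_{2},p_{3}\ge\mathbf{0}$, and clearly $p_{1}+p_{2}+p_{3}=\mathbf{1}$, so $\left(p_{1},p_{2},p_{3}\right)\in\Gamma_{3}$. Moreover $x,y\in[0,1[^{k}$ forces $\mathbf{1}-x>\mathbf{0}$ and $\mathbf{1}-y>\mathbf{0}$; consequently no coordinate of $p_{1}+p_{2}=\mathbf{1}-x$ or of $p_{1}+p_{3}=\mathbf{1}-y$ vanishes, so the convention $\tfrac{0}{0+0}=0$ is never actually invoked in what follows.

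Next I would compute $I_{3}\left(p_{1},p_{2},p_{3}\right)$ in two ways. Applying $M$-recursivity with $n=3$ and using $p_{1}+p_{2}=\mathbf{1}-x$,
\[
I_{3}\left(p_{1},p_{2},p_{3}\right)=I_{2}\left(\mathbf{1}-x,x\right)+M\left(\mathbf{1}-x\right)I_{2}\!\left(\frac{p_{1}}{\mathbf{1}-x},\frac{y}{\mathbf{1}-x}\right).
\]
Since $\frac{p_{1}}{\mathbf{1}-x}+\frac{y}{\mathbf{1}-x}=\mathbf{1}$ coordinatewise, the first slot of the last $I_{2}$ equals $\mathbf{1}-\frac{y}{\mathbf{1}-x}$; hence, by the very definition of $f$, both $I_{2}$-terms become values of $f$ and
\[
I_{3}\left(p_{1},p_{2},p_{3}\right)=f(x)+M\left(\mathbf{1}-x\right)f\!\left(\frac{y}{\mathbf{1}-x}\right).
\]
On the other hand, $3$-semisymmetry gives $I_{3}\left(p_{1},p_{2},p_{3}\right)=I_{3}\left(p_{1},p_{3},p_{2}\right)$, and repeating the same computation with the roles of $p_{2},p_{3}$ interchanged (now $p_{1}+p_{3}=\mathbf{1}-y$) yields
\[
I_{3}\left(p_{1},p_{3},p_{2}\right)=f(y)+M\left(\mathbf{1}-y\right)f\!\left(\frac{x}{\mathbf{1}-y}\right).
\]
Comparing the two displays produces exactly \eqref{FEIM}.

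The remaining task is purely a domain check: $\mathbf{1}-x,\mathbf{1}-y\in[0,1]^{k}$ so that $M$ is evaluated legitimately, and from $x+y\le\mathbf{1}$ together with $\mathbf{1}-x,\mathbf{1}-y>\mathbf{0}$ one gets $\frac{y}{\mathbf{1}-x},\frac{x}{\mathbf{1}-y}\in[0,1]^{k}$, so that $f$ is evaluated legitimately as well. I do not expect a genuine obstacle here: the whole argument is a double application of the recursion glued together by the symmetry hypothesis. The only point demanding a bit of care is the bookkeeping — checking that the auxiliary triple really lies in $\Gamma_{3}$, and that the coordinatewise identity $\frac{p_{1}}{\mathbf{1}-x}=\mathbf{1}-\frac{y}{\mathbf{1}-x}$ holds verbatim (which, as noted above, needs no appeal to the $\tfrac{0}{0}$ convention precisely because $\mathbf{1}-x>\mathbf{0}$).
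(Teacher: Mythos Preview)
Your argument is correct and is the standard one: the paper itself states this theorem without proof (referring to \cite{ESS98}), but exactly the same substitution $p_{1}=\mathbf{1}-x-y$, $p_{2}=y$, $p_{3}=x$ together with $M$-recursivity and $3$-semisymmetry is carried out in the paper's proof of the stability version (Theorem~\ref{main} applied inside the proof of the system-stability theorem), so your approach coincides with the intended one.
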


\section{Known results}

In \cite{Mak07} it is proved that (\ref{FEIM}) is stable, moreover superstable assuming that $k=1$ and the function $M:[0, 1]^{k}\rightarrow\R$ is the power function, i.e.,  the stability of the following equation was investigated
	\[
	f(x)+\left(1-x\right)^{\alpha}f\left(\frac{y}{1-x}\right)=
	f\left(y\right)+\left(1-y\right)^{\alpha}f\left(\frac{x}{1-y}\right),
\]
where $0<\alpha \neq 1$. \\
In \cite{Mor01} a stability type result is proved for $k=1$ and $\alpha=1$, i.e., for the Shannon entropy.
However, Morando's theorem claims stability only on the rationals.

\section{Main result}

In this section we will show stability type
results concerning the fundamental equation of
information of multiplicative type. Our main result is the following
\begin{Thm}\label{main}
Let $\varepsilon\geq 0$ be arbitrary, $M:[0,1]^{k}\rightarrow\R$ be multiplicative but not additive and $f:[0,1]^{k}\rightarrow\R$ be a function. Assume that
\begin{equation}\label{stab}
\left|f\left(x\right)+M\left(\mathbf{1}-x\right)f\left(\frac{y}{\mathbf{1}-x}\right)-
f\left(y\right)-M\left(\mathbf{1}-y\right)f\left(\frac{x}{\mathbf{1}-y}\right)\right|\leq \varepsilon
\end{equation}
holds for all $\left(x, y\right)\in D_{k}$. Then there exist $a, b\in\R$ such that
\begin{equation}\label{stab.ineq}
\begin{array}{l}
\left|f\left(x\right)-\left(aM\left(x\right)+b\left(M\left(\mathbf{1}-x\right)-1\right)\right)\right|\leq  \\
\left|M\left(q*\right)+M\left(\mathbf{1}-q*\right)-1\right|^{-1} \cdot
\left(4\varepsilon +3\varepsilon M\left(\mathbf{1}-xq*\right)\right)
\end{array}
\end{equation}
holds for all $x\in [0, 1]^{k}$, where $q*\in ]0, 1[^{k}$ is such that
$M\left(q*\right)+M\left(\mathbf{1}-q*\right)\neq 1$.
\end{Thm}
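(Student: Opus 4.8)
The plan is to mimic the classical stability argument for the fundamental equation of information, pushing the known one–dimensional scheme (with $M$ a power function) through to the multiplicative, $k$-dimensional setting, the crucial input being Lemma~\ref{lemma}, which tells us that \emph{non-additivity} of $M$ is equivalent to the existence of a point $q^{*}\in\,]0,1[^{k}$ with $M(q^{*})+M(\mathbf{1}-q^{*})\neq 1$. First I would introduce the ``defect'' function
\[
F(x,y):=f(x)+M(\mathbf{1}-x)f\!\left(\frac{y}{\mathbf{1}-x}\right)-f(y)-M(\mathbf{1}-y)f\!\left(\frac{x}{\mathbf{1}-y}\right),
\]
so that $|F(x,y)|\le\varepsilon$ on $D_{k}$, and then substitute cleverly chosen arguments. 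Freezing the second variable at $q^{*}$ (more precisely, running through pairs such as $(x,q^{*})$, $(xq^{*},\text{something})$, and the symmetric partners) produces, after using multiplicativity $M(\mathbf{1}-x)M\!\left(\frac{y}{\mathbf{1}-x}\right)=M(\mathbf{1}-x-y)$ wherever the arguments legitimately lie in the domain, a relation of the schematic form
\[
\bigl(M(q^{*})+M(\mathbf{1}-q^{*})-1\bigr)f(x)=aM(x)+b\bigl(M(\mathbf{1}-x)-1\bigr)+R(x),
\]
where $a,b$ are constants built from the values of $f$ at $q^{*}$ and related fixed points, and $R(x)$ is an explicit combination of at most four copies of $F$ evaluated at admissible pairs, one of which carries the factor $M(\mathbf{1}-xq^{*})$. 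Dividing by the nonzero scalar $M(q^{*})+M(\mathbf{1}-q^{*})-1$ and estimating $|R(x)|\le 4\varepsilon+3\varepsilon M(\mathbf{1}-xq^{*})$ (using $|F|\le\varepsilon$ and $M\ge 0$ from Lemma~1.3, so the multiplicative weights are nonnegative) yields exactly \eqref{stab.ineq}.

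The second step is bookkeeping: one must verify that every substitution used to derive the displayed identity keeps all arguments inside $[0,1]^{k}$ and every pair inside $D_{k}$. This is where working componentwise and choosing $q^{*}\in\,]0,1[^{k}$ strictly interior matters — it guarantees denominators like $\mathbf{1}-x$ and $\mathbf{1}-xq^{*}$ stay away from $\mathbf{0}$ and that small perturbations of $x$ remain legal, so the chain of substitutions does not leave $D_{k}$. I would handle the degenerate coordinates (where some $x_{i}=0$ or $x_{i}$ close to $1$) by the usual limiting/continuity-free algebraic trick: the identity, once established on a suitable open subset, extends to all of $[0,1]^{k}$ because both sides are genuine algebraic combinations of $f$ and $M$ valid pointwise, and the estimate only gets the stated form with the $M(\mathbf{1}-xq^{*})$ weight, which is bounded by $1$ but we keep it explicit since that is what the inequality asserts.

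The main obstacle, I expect, is \emph{organizing the substitutions} so that the error term collapses to precisely ``$4\varepsilon+3\varepsilon M(\mathbf{1}-xq^{*})$'' rather than some larger multiple of $\varepsilon$: the naive approach of iterating the equation several times accumulates too many copies of $F$. The trick is to exploit the symmetry of \eqref{FEIM} in $x$ and $y$ to cancel pairs of defect terms against each other, and to use the multiplicative cocycle identity $M(\mathbf{1}-x)M\!\left(\tfrac{z}{\mathbf{1}-x}\right)=M(\mathbf{1}-x)\cdot\tfrac{M(z)}{M(\mathbf{1}-x)}$-type simplifications so that the weights multiply down to a single $M(\mathbf{1}-xq^{*})$. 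A secondary difficulty is pinning down the constants $a$ and $b$: one must check that the particular linear combination $aM(x)+b(M(\mathbf{1}-x)-1)$ is forced — here I would note that $M(x)$ and $M(\mathbf{1}-x)-1$ are, up to the scalar $M(q^{*})+M(\mathbf{1}-q^{*})-1$, exactly the two ``homogeneous'' solutions of \eqref{FEIM} that survive the substitution scheme, which is why they appear, and the remaining freedom is absorbed into $a,b$. Once these two points are settled, the rest is the routine division-and-estimate already sketched.
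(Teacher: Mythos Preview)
Your high-level strategy is the paper's: introduce the defect $F$, use non-additivity via Lemma~\ref{lemma} to fix a $q^{*}$ with $M(q^{*})+M(\mathbf{1}-q^{*})\neq 1$, derive an algebraic identity expressing $\bigl(M(q^{*})+M(\mathbf{1}-q^{*})-1\bigr)\bigl(f(p)-\text{const}\cdot M(p)\bigr)$ as a combination of values of $F$, then estimate and divide. What is missing from your sketch is the concrete mechanism that makes this work. The paper does \emph{not} keep your $F(x,y)$; it first changes variables $x=\mathbf{1}-p$, $y=pq$, so that the defect becomes
\[
F(p,q)=f(\mathbf{1}-p)+M(p)f(q)-f(pq)-M(\mathbf{1}-pq)f\!\left(\tfrac{\mathbf{1}-p}{\mathbf{1}-pq}\right),
\]
and it is for \emph{this} $F$ that the clean identity exists: a linear combination of seven values of $F$ (four unweighted, three weighted by $M(\mathbf{1}-pq)$) equals the symmetric expression
\[
\bigl[M(q)+M(\mathbf{1}-q)-1\bigr]\bigl[f(p)-f(\mathbf{1})M(p)\bigr]-\bigl[M(p)+M(\mathbf{1}-p)-1\bigr]\bigl[f(q)-f(\mathbf{1})M(q)\bigr].
\]
This is what forces the precise bound $4\varepsilon+3\varepsilon M(\mathbf{1}-pq)$ and is the step you flag as ``the main obstacle'' without resolving. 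Your count (``at most four copies of $F$, one of which carries the factor $M(\mathbf{1}-xq^{*})$'') is inconsistent with the target bound and would not close; seven terms are needed, and the change of variables is what makes them visible.

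Two smaller slips: the multiplicativity relation you quote, $M(\mathbf{1}-x)M\!\bigl(\tfrac{y}{\mathbf{1}-x}\bigr)=M(\mathbf{1}-x-y)$, is false as written (the left side equals $M(y)$; presumably you meant $M(\mathbf{1}-x)M\!\bigl(\mathbf{1}-\tfrac{y}{\mathbf{1}-x}\bigr)=M(\mathbf{1}-x-y)$). And the remark that $M(\mathbf{1}-xq^{*})$ ``is bounded by $1$'' is neither proved nor needed; the theorem keeps this factor explicit precisely because no such bound is assumed.
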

\begin{proof}
Define the function $F$ on $]0,1[^{k}\times [0,1]^{k}$ by
\begin{equation}
F\left(p, q\right)= f\left(\mathbf{1}-p\right)+M\left(p\right)f\left(q\right)-
f\left(pq\right)-M\left(\mathbf{1}-pq\right)f\left(\frac{\mathbf{1}-p}{\mathbf{1}-pq}\right).
\end{equation}
Then equation (\ref{stab}) with the substitution
$x=\mathbf{1}-p$ and $y=pq$ implies that
\begin{equation}\label{eps}
\left|F\left(p, q\right)\right|\leq \varepsilon
\end{equation}
holds for all $p, q\in ]0, 1[^{k}$.
On the other hand we have that
\begin{equation}
\begin{array}{l}
\left|\left[M\left(q\right)+M\left(\mathbf{1}-q\right)-1\right]\cdot
\left[f\left(p\right)-f\left(\mathbf{1}\right)M\left(p\right)\right]\right. \\
-\left[M\left(p\right)+M\left(\mathbf{1}-p\right)-1\right]\cdot
\left[f\left(q\right)-f\left(\mathbf{1}\right)M\left(q\right)\right]\left.\right| \\
=F\left(q, p\right)+F\left(p, q\right)-F\left(q, \mathbf{1}\right)+F\left(p, \mathbf{1}\right) \\
+M\left(\mathbf{1}-pq\right)\left[F\left(\frac{\mathbf{1}-p}{\mathbf{1}-pq}, \mathbf{1}\right)
+F\left(\frac{\mathbf{1}-p}{\mathbf{1}-pq}, \mathbf{1}\right)
-F\left(\frac{\mathbf{1}-p}{\mathbf{1}-pq}, q\right)\right]
\end{array}
\end{equation}
holds for all $p, q \in ]0,1[^{k}$. \\
Now using equation (\ref{eps}) we get that
\begin{equation}\label{becsl}
\begin{array}{l}
\left|\left[M\left(q\right)+M\left(\mathbf{1}-q\right)-1\right]\cdot
\left[f\left(p\right)-f\left(\mathbf{1}\right)M\left(p\right)\right]\right. \\
-\left[M\left(p\right)+M\left(\mathbf{1}-p\right)-1\right]\cdot
\left[f\left(q\right)-f\left(\mathbf{1}\right)M\left(q\right)\right]\left.\right| \\
\leq 4\varepsilon +3\varepsilon M\left(\mathbf{1}-pq\right)
\end{array}
\end{equation}
Since $M$ is not additive there exists a $q*\in ]0,1[^{k}$ such that
\begin{equation}\label{neq}
M\left(q*\right)+M\left(\mathbf{1}-q*\right)\neq 1.
\end{equation}
\noindent
Then with the substitution $q=q*$ in (\ref{becsl}) we have that
\begin{equation}
\begin{array}{l}
\left|\left[M\left(q*\right)+M\left(\mathbf{1}-q*\right)-1\right]\cdot
\left[f\left(p\right)-f\left(\mathbf{1}\right)M\left(p\right)\right]\right. \\
-\left[M\left(p\right)+M\left(\mathbf{1}-p\right)-1\right]\cdot
\left[f\left(q*\right)-f\left(\mathbf{1}\right)M\left(q*\right)\right]\left.\right| \\
\leq 4\varepsilon +3\varepsilon M\left(\mathbf{1}-pq*\right)
\end{array}
\end{equation}
Due to (\ref{neq}) we obtain that
\begin{equation}
\begin{array}{r}
\left|
\left[f\left(p\right)-f\left(\mathbf{1}\right)M\left(p\right)\right]-
\frac{f\left(q*\right)-f\left(\mathbf{1}\right)M\left(q*\right)}
{M\left(q*\right)+M\left(\mathbf{1}-q*\right)-1} \cdot
\left[M\left(p\right)+M\left(\mathbf{1}-p\right)-1\right]\right| \\
\leq \left|M\left(q*\right)+M\left(\mathbf{1}-q*\right)-1\right|^{-1} \cdot
\left(4\varepsilon +3\varepsilon M\left(\mathbf{1}-pq*\right)\right)
\end{array}
\end{equation}
Let
\[
a=f\left(\mathbf{1}\right)M\left(p\right)+
\frac{f\left(q*\right)-f\left(\mathbf{1}\right)M\left(q*\right)}
{M\left(q*\right)+M\left(\mathbf{1}-q*\right)-1}
\]
and
\[
b=\frac{f\left(q*\right)-f\left(\mathbf{1}\right)M\left(q*\right)}
{M\left(q*\right)+M\left(\mathbf{1}-q*\right)-1},
\]
\noindent
Therefore
\begin{equation}
\begin{array}{l}
\left|f\left(p\right)-\left[aM\left(p\right)+b\left(M\left(\mathbf{1}-p\right)-1\right)\right]\right| \leq \\
\left|M\left(q*\right)+M\left(\mathbf{1}-q*\right)-1\right|^{-1} \cdot
\left(4\varepsilon +3\varepsilon M\left(\mathbf{1}-pq*\right)\right)
\end{array}
\end{equation}
holds for all $p\in ]0,1[^{k}$. \\
\noindent
A direct calculation shows that (\ref{stab.ineq}) holds in case $p\in [0,1]^{k}\setminus ]0,1[^{k}$.
\end{proof}
\noindent
In what follows we define a function $K:[0,1]^{k}\rightarrow\R$ by
\begin{equation}\label{def.K}
K(x)=\frac{4\varepsilon+3\varepsilon M\left(\mathbf{1}-xq*\right)}
{\left|M\left(q*\right)+M\left(\mathbf{1}-q*\right)-1\right|}
\end{equation}
where $M:[0,1]^{k}\rightarrow\R$ is multiplicative but not additive,
$\varepsilon\geq 0$ arbitrary but fixed and $q*\in [0,1]^{k}$ is such that
$M\left(q*\right)+M\left(\mathbf{1}-q*\right)\neq 1$. \\
\noindent
Using the previous theorem we shall conclude the following.

\begin{Cor}
In case $\varepsilon=0$ in Theorem \ref{main}., then we get the general solution of equation
(\ref{FEIM}).
\end{Cor}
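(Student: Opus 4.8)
The plan is to specialize Theorem~\ref{main} to the case $\varepsilon = 0$ and show that the resulting inequality forces $f$ to have exactly the claimed form, and conversely that every such $f$ solves~(\ref{FEIM}). When $\varepsilon = 0$, the function $K$ defined in~(\ref{def.K}) is identically zero, so the bound~(\ref{stab.ineq}) collapses to the equality
\[
f(x) = aM(x) + b\bigl(M(\mathbf{1}-x) - 1\bigr), \qquad x \in [0,1]^{k},
\]
for some $a, b \in \R$. Thus the first half of the corollary is immediate: every solution of~(\ref{FEIM}) is of this form, with $a$ and $b$ given by the explicit formulas in the proof of Theorem~\ref{main} (note $f(\mathbf{1}) = 0$ follows by setting $x = y = \mathbf{0}$, or can be read off directly, which simplifies the expression for $a$).

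For the converse, I would substitute $f(x) = aM(x) + b(M(\mathbf{1}-x) - 1)$ into the left-hand side of~(\ref{FEIM}) and verify it vanishes. The term $aM(x)$ contributes, using multiplicativity $M\bigl(\tfrac{y}{\mathbf{1}-x}\bigr) = M(y)/M(\mathbf{1}-x)$ on the relevant domain, a summand $a\bigl(M(x) + M(y)\bigr)$ on each side, which cancels. For the $b$-part one expands $M(\mathbf{1}-x)\bigl(M\bigl(\tfrac{\mathbf{1}-x-y}{\mathbf{1}-x}\bigr) - 1\bigr) = M(\mathbf{1}-x-y) - M(\mathbf{1}-x)$, so the left side of~(\ref{FEIM}) for the $b$-part becomes $b\bigl(M(\mathbf{1}-x) - 1 + M(\mathbf{1}-x-y) - M(\mathbf{1}-x)\bigr) = b\bigl(M(\mathbf{1}-x-y) - 1\bigr)$, which is symmetric in $x$ and $y$; hence the difference in~(\ref{FEIM}) vanishes. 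This shows the family $\{aM + b(M(\mathbf{1}-\cdot)-1)\}$ is exactly the solution set.

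The main subtlety is not a deep obstacle but a bookkeeping point: one must check that the substitutions $\tfrac{y}{\mathbf{1}-x}$ etc. stay in $[0,1]^{k}$ for $(x,y) \in D_{k}$ so that multiplicativity applies coordinatewise, and one should handle the boundary case where some coordinate of $\mathbf{1}-x$ vanishes (using the convention for $\frac{0}{0}$ as in the recursivity definition, together with $M \geq 0$ from Lemma~1.3 and the product decomposition of Lemma~1.4 to control $M$ near the boundary). Since $\varepsilon = 0$ also trivializes the requirement that $q*$ only needs to satisfy $M(q*) + M(\mathbf{1}-q*) \neq 1$ — which exists precisely because $M$ is not additive, by Lemma~\ref{lemma} — no further hypotheses are needed, and the corollary follows.
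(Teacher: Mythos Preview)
Your proposal is correct and in fact more detailed than the paper, which states the corollary without proof, treating it as an immediate specialization of Theorem~\ref{main}. Two small remarks: the aside that $f(\mathbf{1})=0$ follows from $x=y=\mathbf{0}$ is not right (that substitution yields a tautology), but nothing in your main argument depends on it; and since $D_{k}$ requires $x,y\in[0,1[^{k}$, every coordinate of $\mathbf{1}-x$ is strictly positive, so the boundary concern you flag about dividing by a zero coordinate does not actually arise in verifying~(\ref{FEIM}).
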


\begin{Cor}\label{rem1}
If the function $M:[0, 1]^{k}\rightarrow\R$ is bounded above by a constant $B\in\R$ then we get that inequality
(\ref{stab}) on $D_{k}$ implies
\begin{equation}
\begin{array}{l}
\left|f(x)-\left(aM\left(x\right)+b\left(M\left(\mathbf{1}-x\right)-1\right)\right)\right|\leq  \\
\left|M\left(q*\right)+M\left(\mathbf{1}-q*\right)-1\right|^{-1} \cdot
\left(4\varepsilon +3B\varepsilon\right)
\end{array}
\end{equation}
on $[0, 1]^{k}$, where $q*\in ]0, 1[^{k}$ is such that $M\left(q*\right)+M\left(\mathbf{1}-q*\right)\neq 1$.
\end{Cor}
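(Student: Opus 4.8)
The plan is to obtain this as an immediate specialization of Theorem~\ref{main}. That theorem already supplies constants $a,b\in\R$ and a point $q*\in ]0,1[^{k}$ with $M\left(q*\right)+M\left(\mathbf{1}-q*\right)\neq 1$ (which exists because $M$ is not additive) such that
\[
\left|f(x)-\left(aM\left(x\right)+b\left(M\left(\mathbf{1}-x\right)-1\right)\right)\right|\leq\left|M\left(q*\right)+M\left(\mathbf{1}-q*\right)-1\right|^{-1}\left(4\varepsilon+3\varepsilon M\left(\mathbf{1}-xq*\right)\right)
\]
holds for every $x\in [0,1]^{k}$. The only step remaining is to replace the $x$-dependent quantity $M\left(\mathbf{1}-xq*\right)$ by the uniform constant $B$.

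First I would check that the argument $\mathbf{1}-xq*$ lies in $[0,1]^{k}$ whenever $x\in [0,1]^{k}$ and $q*\in ]0,1[^{k}$, so that $M\left(\mathbf{1}-xq*\right)$ is indeed defined; this is clear componentwise. Then, using the lemma of the introduction asserting that a multiplicative function $M$ is nonnegative on $[0,1]^{k}$ together with the hypothesis $M\leq B$, I would conclude $0\leq M\left(\mathbf{1}-xq*\right)\leq B$. Since $\varepsilon\geq 0$, multiplying through by $3\varepsilon$ gives $3\varepsilon M\left(\mathbf{1}-xq*\right)\leq 3B\varepsilon$, whence $4\varepsilon+3\varepsilon M\left(\mathbf{1}-xq*\right)\leq 4\varepsilon+3B\varepsilon$ for all such $x$. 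Feeding this majorant into the estimate above yields precisely the asserted inequality on $[0,1]^{k}$.

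I do not anticipate any genuine obstacle here: the corollary is nothing more than the observation that the single non-uniform term appearing in the bound of Theorem~\ref{main} can be controlled by $B$. The only points worth a word of care are that $\mathbf{1}-xq*$ remains in the domain of $M$ and that the sign conditions $\varepsilon\geq 0$ and $M\geq 0$ are in force, so that the inequality $M\left(\mathbf{1}-xq*\right)\leq B$ may be multiplied by $3\varepsilon$ without reversing its direction.
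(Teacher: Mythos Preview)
Your argument is correct and is exactly the intended derivation: the paper states this corollary without proof, as it follows immediately from Theorem~\ref{main} by bounding the single $x$-dependent term $M(\mathbf{1}-xq*)$ by the constant $B$. Your extra care about the domain of $\mathbf{1}-xq*$ and the sign of $\varepsilon$ is appropriate, though note that the nonnegativity of $M$ is not actually needed for the estimate since $\varepsilon\geq 0$ and $M\leq B$ already give $3\varepsilon M(\mathbf{1}-xq*)\leq 3B\varepsilon$ directly.
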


\begin{Cor}
Due to Corollary \ref{rem1}. we obtain that the equation
\begin{equation}
f\left(x\right)+M\left(\mathbf{1}-x\right)f\left(\frac{y}{\mathbf{1}-x}\right)=
f\left(y\right)+M\left(\mathbf{1}-y\right)f\left(\frac{x}{\mathbf{1}-y}\right)
\end{equation}
is superstable on $D_{k}$ in case $M$ is bounded above.
\end{Cor}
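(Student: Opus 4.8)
The plan is to read this off directly from Corollary~\ref{rem1}, so essentially no new argument is needed; it only remains to make explicit what superstability means and why the bounded‑above hypothesis delivers it. Recall that an equation $E(f)=0$ is said to be \emph{superstable} on a set if for every $\varepsilon\geq 0$ each $f$ with $\left|E(f)\right|\leq\varepsilon$ on that set is either bounded there or is an exact solution of $E(f)=0$. Hence it suffices to show that, when $M$ is multiplicative, not additive and bounded above by some $B\in\R$, every $f$ satisfying (\ref{stab}) on $D_{k}$ is bounded on $[0,1]^{k}$.

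First I would note that $M$ is in fact bounded on the whole cube: by the Lemma asserting that a multiplicative function is nonnegative we have $0\leq M(x)\leq B$ for all $x\in[0,1]^{k}$, and since $\mathbf{1}-x\in[0,1]^{k}$ whenever $x\in[0,1]^{k}$, also $0\leq M(\mathbf{1}-x)\leq B$ there. Consequently, for any $a,b\in\R$ the function
\[
g(x):=aM(x)+b\left(M(\mathbf{1}-x)-1\right),\qquad x\in[0,1]^{k},
\]
obeys $\left|g(x)\right|\leq |a|B+|b|(B+1)$, i.e. $g$ is bounded on $[0,1]^{k}$. (By the $\varepsilon=0$ case of Theorem~\ref{main} this $g$ is moreover an exact solution of (\ref{FEIM}), so $f$ will even lie within a bounded distance of an honest solution.)

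Now I would invoke Corollary~\ref{rem1}: from (\ref{stab}) on $D_{k}$ together with $M\leq B$ it produces $a,b\in\R$ and a point $q*\in\,]0,1[^{k}$ with $M(q*)+M(\mathbf{1}-q*)\neq 1$ for which
\[
\left|f(x)-g(x)\right|\leq \left|M(q*)+M(\mathbf{1}-q*)-1\right|^{-1}\left(4\varepsilon+3B\varepsilon\right)
\]
holds for every $x\in[0,1]^{k}$, the right‑hand side being a finite constant $C$ independent of $x$. Combining this with the previous step gives $\left|f(x)\right|\leq\left|g(x)\right|+C\leq|a|B+|b|(B+1)+C$ for all $x\in[0,1]^{k}$, so $f$ is bounded. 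Since every approximate solution of (\ref{FEIM}) is thus bounded, the superstability dichotomy is fulfilled (always by its first alternative), which is the assertion. There is no genuine obstacle here; the one small point worth flagging is the passage from ``$M$ bounded above'' to ``$M$ bounded'', which relies on the nonnegativity of multiplicative functions recalled above.
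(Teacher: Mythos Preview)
Your proposal is correct and is precisely the argument the paper has in mind: the paper gives no separate proof and simply records the corollary as a direct consequence of Corollary~\ref{rem1}, and you have spelled out the routine details (boundedness of $M$ via nonnegativity of multiplicative functions, hence boundedness of $g(x)=aM(x)+b(M(\mathbf{1}-x)-1)$, hence boundedness of $f$).
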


\begin{Rem}
If $M\left(x\right)=x^{\alpha}$, $\left(x\in [0, 1]\right)$, where $0<\alpha\neq 1$
 then we get the result of Maksa (see \cite{Mak07}).
\end{Rem}
\noindent
Finally, the following theorem concerns the stability of a system of equations.

\begin{Thm}
Let $I_{n}$ be the sequence of functions
$I_{n}:\Gamma_{n}\rightarrow\R$ $(n\geq 2)$,
$M:[0,1]^{k}\rightarrow\R$ be a multiplicative function.
Suppose that there exist a sequence $\left(\varepsilon_{n}\right)$
of non-negative real numbers such that
\begin{equation}\label{ineq.syst}
\begin{array}{l}
\left| I_{n}\left(p_{1}, \ldots, p_{n} \right)-
I_{n-1}\left( p_{1}+p_{2}, p_{3}, \ldots, p_{n}\right) \right.\\
-M\left(p_{1}+p_{2}\right)
\left. I_{2}\left(\frac{p_{1}}{p_{1}+p_{2}}, \frac{p_{2}}{p_{1}+p_{2}}\right)\right|\leq \varepsilon_{n-1}
\end{array}
\end{equation}
holds for all $n\geq 3$ and $\left(p_{1}, \ldots, p_{n}\right)\in\Gamma_{n}$, and
\[
\left|I_{3}\left(p_{1}, p_{2}, p_{3}\right)-
I_{3}\left(p_{1}, p_{3, p_{2}}\right)\right|\leq \varepsilon_{1}
\]
holds on $\Gamma_{3}$.
Then there exist $c, d\in\R$ and a
$q*\in [0,1]^{k}$ such that
\begin{equation}\label{stab.syst}
\begin{array}{l}
\left|
I_{n}\left(p_{1}, \ldots, p_{n}\right)-
\left[c\left(\sum^{n}_{i=1}M(p_{i})-1\right)-
d\left(M\left(p_{1}\right)-1\right)\right]\right| \\
\leq
\sum^{n-1}_{k=2}\varepsilon_{k}+
\left(1+\left(n-2\right)M\left(p_{1}+p_{2}\right)\right)K\left(p_{2}\right)
\end{array}
\end{equation}
holds for all $n\geq 2$ and $\left(p_{1}, \ldots, p_{n}\right)\in\Gamma_{n}$, where
the convention $\sum^{1}_{k=2}\varepsilon_{k}=0$ is adapted and the function $K$
is defined by formula (\ref{def.K}).
\end{Thm}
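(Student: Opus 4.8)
The plan is to reduce the whole system to the single inequality (\ref{stab}), which Theorem \ref{main} already handles, and then to recover $I_n$ from $I_2$ by unfolding the approximate recursivity (\ref{ineq.syst}).

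First I would introduce $f(x):=I_2(\mathbf{1}-x,x)$ on $[0,1]^k$ and derive (\ref{stab}) for $f$. Taking $n=3$ in (\ref{ineq.syst}) for a triple $(p_1,p_2,p_3)\in\Gamma_3$ and observing that $I_2(p_1+p_2,p_3)=f(p_3)$ and $I_2\bigl(\tfrac{p_1}{p_1+p_2},\tfrac{p_2}{p_1+p_2}\bigr)=f\bigl(\tfrac{p_2}{p_1+p_2}\bigr)$, I obtain $\bigl|I_3(p_1,p_2,p_3)-f(p_3)-M(\mathbf{1}-p_3)f\bigl(\tfrac{p_2}{\mathbf{1}-p_3}\bigr)\bigr|\le\varepsilon_2$. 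Writing the same estimate for the transposed triple $(p_1,p_3,p_2)$ and combining it with $\bigl|I_3(p_1,p_2,p_3)-I_3(p_1,p_3,p_2)\bigr|\le\varepsilon_1$, the triangle inequality shows that $f$ satisfies (\ref{stab}) with $\varepsilon$ replaced by $\varepsilon_1+2\varepsilon_2$, first on the interior of $D_k$ and then, by the boundary argument used in the proof of Theorem \ref{main}, on all of $D_k$. Since $M$ is not additive — which is already implicit in the existence of the point $q*$ occurring in (\ref{def.K}) — Theorem \ref{main} then yields $a,b\in\R$ and $q*\in\,]0,1[^k$ with $M(q*)+M(\mathbf{1}-q*)\neq1$ such that $\bigl|f(x)-aM(x)-b(M(\mathbf{1}-x)-1)\bigr|\le K(x)$ for all $x\in[0,1]^k$, where $K$ is (\ref{def.K}) formed with $\varepsilon:=\varepsilon_1+2\varepsilon_2$.

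Next I would set $c:=a$, $d:=a-b$ and establish (\ref{stab.syst}) by unfolding the recursivity. Applying (\ref{ineq.syst}) $n-2$ times, always merging the current first two arguments, and writing $P_j:=p_1+\dots+p_j$, I get
\[
\Bigl|\,I_n(p_1,\dots,p_n)-f(p_n)-\sum_{j=2}^{n-1}M(P_j)\,f\bigl(\tfrac{p_j}{P_j}\bigr)\Bigr|\le\sum_{k=2}^{n-1}\varepsilon_k ,
\]
since at the last stage $I_2(P_{n-1},p_n)=f(p_n)$, while the $j$-th stage spawns $M(P_j)I_2\bigl(\tfrac{P_{j-1}}{P_j},\tfrac{p_j}{P_j}\bigr)=M(P_j)f\bigl(\tfrac{p_j}{P_j}\bigr)$. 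Replacing every value of $f$ by $aM(\cdot)+b(M(\mathbf{1}-\cdot)-1)$ and using $M(P_j)M\bigl(\tfrac{p_j}{P_j}\bigr)=M(p_j)$ and $M(P_j)M\bigl(\tfrac{P_{j-1}}{P_j}\bigr)=M(P_{j-1})$, the main part telescopes: the $b$-terms collapse through $\sum_{j=2}^{n-1}\bigl(M(P_{j-1})-M(P_j)\bigr)=M(p_1)-M(P_{n-1})$, and together with the contribution $aM(p_n)+b(M(P_{n-1})-1)$ of $f(p_n)$ one is left with $a\sum_{i=2}^{n}M(p_i)+bM(p_1)-b$, which equals $c\bigl(\sum_{i=1}^nM(p_i)-1\bigr)-d(M(p_1)-1)$ precisely because $c=a$ and $d=a-b$. (When $n=2$ the sum is empty and $I_2(p_1,p_2)=f(p_2)$, so the same identity with $p_1=\mathbf{1}-p_2$ turns the claim into the estimate for $f$ at $p_2$, whose right-hand side is $K(p_2)$.)

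The hard part will be the error accounting in this last step. Besides $\sum_{k=2}^{n-1}\varepsilon_k$, the $n-1$ replacements of $f$ by $aM+b(M(\mathbf{1}-\cdot)-1)$ contribute $K(p_n)+\sum_{j=2}^{n-1}M(P_j)K\bigl(\tfrac{p_j}{P_j}\bigr)$, which must be bounded by $\bigl(1+(n-2)M(p_1+p_2)\bigr)K(p_2)$. The tool I would use is the multiplicative identity $M(p)\,M\bigl(\mathbf{1}-\tfrac{q}{p}q*\bigr)=M(p-qq*)$, which turns each $M(P_j)K\bigl(\tfrac{p_j}{P_j}\bigr)$ into $\bigl|M(q*)+M(\mathbf{1}-q*)-1\bigr|^{-1}\bigl(4\varepsilon M(P_j)+3\varepsilon M(P_j-p_jq*)\bigr)$; one then has to match the concrete values $M(P_j)$, $M(P_j-p_jq*)$, $M(p_n)$, $M(\mathbf{1}-p_nq*)$ against $M(p_1+p_2)$ and $K(p_2)$. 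This estimate — where the possibly unbounded behaviour of $M$ makes the bookkeeping subtle — is the only genuinely delicate point of the argument; the degenerate cases in which some $p_i$ has a vanishing coordinate (so that a quotient is read with the convention $\tfrac{0}{0+0}=0$) are settled separately, exactly as in Theorem \ref{main}.
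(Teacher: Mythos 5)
Your overall strategy coincides with the paper's: the $n=3$ substitution $p_{1}=\mathbf{1}-x-y$, $p_{2}=y$, $p_{3}=x$ together with the $3$-semisymmetry estimate gives (\ref{stab}) for $f(x)=I_{2}(\mathbf{1}-x,x)$ with $\varepsilon=\varepsilon_{1}+2\varepsilon_{2}$, Theorem \ref{main} supplies $a,b$, and the choice $c=a$, $d=a-b$ makes the main term come out right. Your telescoping of the recursion is just the unfolded form of the paper's induction on $n$ via the exactly $M$-recursive measure $J_{n}(p_{1},\ldots,p_{n})=c\left(\sum_{i}M(p_{i})-1\right)+(b-a)\left(M(p_{1})-1\right)$, and the algebra you indicate (the $b$-terms collapsing through $\sum_{j}(M(P_{j-1})-M(P_{j}))$) is correct.

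The problem is exactly the step you defer as ``the only genuinely delicate point'': it cannot be carried out. Your unfolding yields the error term $K(p_{n})+\sum_{j=2}^{n-1}M(P_{j})K(p_{j}/P_{j})$ with $P_{j}=p_{1}+\cdots+p_{j}$, and this is \emph{not} dominated by $\left(1+(n-2)M(p_{1}+p_{2})\right)K(p_{2})$ in general. Take $k=1$, $M(x)=\sqrt{x}$ (multiplicative, not additive), $n=4$, and let $p_{1},p_{2},p_{4}\to 0$, $p_{3}\to 1$: then $K(p_{4})\to 7\varepsilon\,|M(q*)+M(1-q*)-1|^{-1}$ and $M(P_{3})K(p_{3}/P_{3})\to K(1)\geq 4\varepsilon\,|M(q*)+M(1-q*)-1|^{-1}$, so the left-hand side stays above $11\varepsilon\,|M(q*)+M(1-q*)-1|^{-1}$, while the right-hand side tends to $K(0)=7\varepsilon\,|M(q*)+M(1-q*)-1|^{-1}$ because $M(p_{1}+p_{2})\to 0$. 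The source of the mismatch is that at the $j$-th stage the spawned term carries $M(P_{j})$ and $K(p_{j}/P_{j})$, not $M(p_{1}+p_{2})$ and $K(p_{2})$; the paper's induction reaches the stated bound only because the inductive hypothesis, applied at $(p_{1}+p_{2},p_{3},\ldots,p_{n+1})$, is written down with $M(p_{1}+p_{2})$ and $K(p_{2})$ where it should read $M(p_{1}+p_{2}+p_{3})$ and $K(p_{3})$, and because $K(p_{2}/(p_{1}+p_{2}))$ is silently replaced by $K(p_{2})$. So the honest conclusion of your (and the paper's) argument is the bound $\sum_{k=2}^{n-1}\varepsilon_{k}+K(p_{n})+\sum_{j=2}^{n-1}M(P_{j})K(p_{j}/P_{j})$; to get an inequality of the shape (\ref{stab.syst}) you need an extra hypothesis (e.g.\ $M$ bounded above by $B$, in which case every $K$-value is at most $(4+3B)\varepsilon\,|M(q*)+M(\mathbf{1}-q*)-1|^{-1}$ and the error is simply $\sum_{k=2}^{n-1}\varepsilon_{k}+(n-1)$ times that constant), or a reformulated right-hand side.
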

\begin{proof}
The proof runs by induction on $n$.
Let $\left(x, y\right)\in D_{k}$, $n=3$ and substitute
\[
p_{1}=1-x-y
\quad
p_{2}=y
\quad
p_{3}=x
\]
into (\ref{ineq.syst}). Then
\[
\left|
I_{3}\left(\mathbf{1}-x-y, y, x\right)-I_{2}\left(\mathbf{1}-x, x\right)-
M\left(\mathbf{1}-x\right)
I_{2}\left(\mathbf{1}-\frac{y}{\mathbf{1}-x}, \frac{y}{\mathbf{1}-x}\right)
\right|
\leq \varepsilon_{2}
\]
holds.
Hence we get that the function $f:[0,1]^{k}\rightarrow\R$ defined by
\[
f(x)=I_{2}\left(\mathbf{1}-x, x\right) \qquad \left(x\in [0,1]^{k}\right)
\]
satisfies
\begin{equation}
\begin{array}{l}
\left|
f\left(x\right)+M\left(\mathbf{1}-x\right)f\left(\frac{y}{\mathbf{1}-x}\right)-
f(x)-M\left(\mathbf{1}-x\right)f\left(\frac{x}{\mathbf{1}-y}\right)
\right| \\
\leq
\left|
f(x)+M\left(\mathbf{1}-x\right)f\left(\frac{y}{\mathbf{1}-x}\right)-
I_{3}\left(\mathbf{1}-x-y, y, x\right)
\right| \\
+ \left|
I_{3}\left(\mathbf{1}-x-y, y, x\right)-I_{3}\left(\mathbf{1}-y-x, x, y\right)
\right| \\
+
\left |
I_{3}\left(\mathbf{1}-y-x, y, x\right)-f(y)-M\left(\mathbf{1}-y\right)f\left(\frac{y}{\mathbf{1}-y}\right)
\right|
\leq \varepsilon_{1}+2\varepsilon_{2}
\end{array}
\end{equation}
for all $(x, y)\in D_{k}$.
Thus, by Theorem \ref{main}. we get that there exist $a, b\in\R$ and
a $q*\in [0,1]$ such that
\begin{equation}
\begin{array}{l}
\left|f\left(x\right)-\left[aM\left(x\right)+b\left(M\left(\mathbf{1}-x\right)-1\right)\right]\right|\\
 \leq \left|M\left(q*\right)+M\left(\mathbf{1}-q*\right)-1\right|^{-1} \cdot \\
\left(4\left(\varepsilon_{1}+2\varepsilon_{2}\right) +
3\left(\varepsilon_{1}+2\varepsilon_{2}\right) M\left(\mathbf{1}-xq*\right)\right)
\end{array}
\end{equation}
holds for all $x\in [0,1]^{k}$. Let now
$\left(p_{1}, p_{2}\right)\in \Gamma_{2}$, then we obtain that
\begin{equation}
\begin{array}{l}
\left|
I_{2}\left(p_{1}, p_{2}\right)-
\left[aM\left(p_{2}\right)+b\left(M\left(p_{1}\right)-1\right)\right]\right. \\
\left. \leq
\left|M\left(q*\right)+M\left(\mathbf{1}-q*\right)-1\right|^{-1} \cdot
\left(4\varepsilon +3\varepsilon M\left(\mathbf{1}-p_{2}q*\right)\right)
\right|
\end{array}
\end{equation}
holds. Define $c=a$ and $d=b-a$, then we get that
\[
\begin{array}{l}
\left|
I_{2}\left(p_{1}, p_{2}\right)-
c\left[\sum^{2}_{k=1}M\left(p_{k}\right)-d\left(M(p_{1})-1\right)\right]\right. \\
\left. \leq
\left|M\left(q*\right)+M\left(\mathbf{1}-q*\right)-1\right|^{-1} \cdot
\left(4\varepsilon +3\varepsilon M\left(\mathbf{1}-p_{2}q*\right)\right)
\right|\\
= \sum^{2-1}_{k=2}\varepsilon_{k}+
\left(1-\left(1-1\right)M\left(p_{1}+p_{2}\right)\right)\cdot K\left(p_{2}\right),
\end{array}
\]
hence the statement holds for $n=2$.  Assume now that (\ref{ineq.syst}) holds and
introduce the following notation
\[
J_{n}\left(p_{1}, \ldots, p_{n}\right)=c\left(\sum^{n}_{k=1}M\left(p_{k}\right)-1\right)
+d\left(M\left(p_{1}\right)-1\right)
\]
for all $n\geq 2$, $\left(p_{1}, \ldots, p_{n}\right)\in\Gamma_{n}$.
It can be easily seen that $J_{n}:\Gamma_{n}\rightarrow\R$ is an
$M$--recursive and $3$-semisymmetric information measure $\left(n\in\N\right)$.
Therefore we get that
\begin{equation}
\begin{array}{l}
\left|I_{n+1}\left(p_{1}, \ldots, p_{n+1}\right)-J_{n+1}\left(p_{1}, \ldots, p_{n+1}\right) \right| \\
=
\left|I_{n+1}\left(p_{1}, \ldots, p_{n+1}\right)-J_{n}\left(p_{1}+p_{2}, \ldots, p_{n+1}\right)-
M\left(p_{1}+p_{2}\right)I_{2}\left(\frac{p_{1}}{p_{1}+p_{2}}, \frac{p_{2}}{p_{1}+p_{2}}\right) \right|\\
\leq
\left|I_{n+1}\left(p_{1}, \ldots, p_{n+1}\right)-
I_{n}\left(p_{1}+p_{2}, \ldots, p_{n+1}\right)-
M\left(p_{1}+p_{2}\right)I_{2}\left(\frac{p_{1}}{p_{1}+p_{2}}, \frac{p_{2}}{p_{1}+p_{2}}\right)\right| \\
+
\left|
I_{n}\left(p_{1}+p_{2}, \ldots, p_{n+1}\right)- J_{n}\left(p_{1}+p_{2}, \ldots, p_{n}\right)
\right| \\
+
\left|
M\left(p_{1}+p_{2}\right)I_{2}\left(\frac{p_{1}}{p_{1}+p_{2}}, \frac{p_{2}}{p_{1}+p_{2}}\right)-
M\left(p_{1}+p_{2}\right)J_{2}\left(\frac{p_{1}}{p_{1}+p_{2}}, \frac{p_{2}}{p_{1}+p_{2}}\right)
\right| \\
\leq
\varepsilon_{n}+ \sum^{n-1}_{k=2}\varepsilon_{n}+
\left(1+\left(n-2\right)M\left(p_{1}+p_{2}\right)\right)K\left(p_{2}\right)+
M\left(p_{1}+p_{2}\right)K\left(p_{2}\right)\\
=
\sum^{n}_{k=2}\varepsilon_{k}+\left(1+\left(n-1\right)M\left(p_{1}+p_{2}\right)\right)K\left(p_{2}\right),
\end{array}
\end{equation}
 for all $\left(p_{1}, \ldots, p_{n}\right)\in \Gamma_{n+1}$, that is, (\ref{stab.syst}) holds for
 $n+1$ instead of $n$, which ends the proof.
\end{proof}

\begin{Rem}
Our argument does not work in case $M$ is a projection, i.e., we cannot prove stability concerning the
fundamental equation of information in this case neither on the closed nor on the open domain.
\end{Rem}

\end{document}